\documentclass[11pt]{article}

\usepackage[a4paper,margin=1.15in]{geometry}
\usepackage[T1]{fontenc}
\usepackage{lmodern}
\usepackage{microtype}
\usepackage{xcolor}
\usepackage{amsmath,amssymb,amsthm,mathtools}
\usepackage[numbers,sort&compress]{natbib}
\usepackage[colorlinks=true,linkcolor=blue!50!black,
           citecolor=blue!50!black,urlcolor=blue!50!black,]{hyperref}
\usepackage[nameinlink,capitalize,noabbrev]{cleveref}
\usepackage{authblk}
\usepackage{mathrsfs}

\newtheorem{theorem}{Theorem}[section]
\newtheorem{lemma}[theorem]{Lemma}

\newtheorem{conjecture}[theorem]{Conjecture}
\newtheorem{observation}{Observation}[section]
\newtheorem{claim}{Claim}[section]
\theoremstyle{remark}

\newcommand{\ex}{\operatorname{ex}}

\allowdisplaybreaks

\title{\bf A subquadratic bound for generalized Tur\'an numbers of odd cycles}
\author{
Zhen Liu\footnote{Email: 1552580575@qq.com},
~Chuanshu Wu\footnote{Email: cswu97@126.com (Corresponding author)}\\
{\small Center for Discrete Mathematics, Fuzhou University, Fujian 350003, China}}

\date{\today}

\begin{document}

\maketitle

\begin{abstract}
For a graph $H$ and a family of graphs $\mathcal F$, let $\ex(n,H,\mathcal F)$ denote the maximum number of copies of $H$ in an $\mathcal F$-free graph on $n$ vertices. For every integer $i\ge 3$, let $C_i$ denote the cycle of length $i$. For $r\ge 3$, set $\mathscr {C}_r=\{C_3,C_4,\ldots,C_r\},$ and set $\mathscr {C}_2=\varnothing$. In this paper, we prove that, for all integers $l>k\ge 2$,
$$
\ex(n,C_{2k+1},\mathscr {C}_{2k}\cup\{C_{2l+1}\})
=O_{k,l} \left(n^{\,2-\frac{1}{k(k+1)(l-k)}}\right).
$$
Together with the known upper bounds for the number of triangles in $C_{2l+1}$-free graphs, this confirms a conjecture of Gerbner, Gy\H{o}ri, Methuku, and Vizer.
\end{abstract}

\medskip
\noindent\textbf{Keywords:} generalized Tur\'an number; odd cycle; girth;
extremal graph theory

\smallskip
\noindent\textbf{2020 Mathematics Subject Classification:} 05C35, 05C38

\section{Introduction}

Throughout this paper, all graphs are finite and simple. For a family of graphs $\mathcal F$, a graph $G$ is called $\mathcal F$-free if it contains no member of $\mathcal F$ as a subgraph. The classical Tur\'an number is
\[
\ex(n,\mathcal F)
 =\max\bigl\{|E(G)|:|V(G)|=n\text{ and $G$ is $\mathcal F$-free}\bigr\}.
\]
For graphs $H$ and $G$, let $N(H,G)$ denote the number of copies of $H$ in $G$, counted as unlabeled, not necessarily induced subgraphs. The generalized Tur\'an number of $H$ with respect to $\mathcal F$ is
\[
\ex(n,H,\mathcal F)
 =\max\bigl\{N(H,G):|V(G)|=n\text{ and $G$ is $\mathcal F$-free}\bigr\}.
\]
Let $K_r$ denote the complete graph on $r$ vertices. Thus $\ex(n,K_2,\mathcal F)=\ex(n,\mathcal F)$. The clique case $\ex(n,K_s,K_t)$ was determined by Zykov~\cite{Zykov}, while the systematic study of generalized Tur\'an numbers for general graphs $H$ and $F$ was initiated by Alon and Shikhelman~\cite{AlonShikhelman}.

For every integer $r\ge 3$, let $C_r$ denote the cycle of length $r$. Generalized Tur\'an problems involving cycles have received considerable attention. An early result of Bollob\'as and Gy\H{o}ri~\cite{BollobasGyori} states that $ \ex(n,C_3,C_5)=\Theta\!\left(n^{3/2}\right).$ Gy\H{o}ri and Li~\cite{GyoriLi} extended this problem to arbitrary forbidden odd cycles and proved that, for every $q\ge 2$, $ \ex(n,C_3,C_{2q+1}) =O_q\bigl(\ex(n,C_{2q})\bigr).$ Together with the Bondy--Simonovits theorem~\cite{BondySimonovits}, this gives
\begin{align}\label{C3}
    \ex(n,C_3,C_{2q+1}) =O_q\!\left(n^{1+1/q}\right).
\end{align}
Several other generalized Tur\'an problems for cycles have also been studied. Gishboliner and Shapira~\cite{GishbolinerShapira} determined, for all distinct fixed integers $r,s$ with $r>3$, the order of magnitude of $ \ex(n,C_r,C_s). $ Related extremal problems for odd cycles were studied by Grzesik and Kielak~\cite{GrzesikKielak} and by Beke and Janzer~\cite{BekeJanzer}. For further results, we refer the reader to the survey of Gerbner and Palmer~\cite{GerbnerPalmer}.

For every integer $r\ge 3$, set
$\mathscr {C}_r:=\{C_3,C_4,\ldots,C_r\},$ and $\mathscr {C}_2:=\varnothing.$
Gerbner et al.~\cite{GGMV} obtained the following conditional result. More precisely, fix an integer $k\ge 2$ and suppose that there exists a family of graphs $\{G_n\}$ satisfying
$|V(G_n)|=n,$ $|E(G_n)|\ge \frac12 n^{1+1/k},$
$G_n\text{ is }\{C_4,C_6,\ldots,C_{2k}\}\text{-free},$
and $d_{G_n}(v)=\frac{2|E(G_n)|}{n}+O_k(1)$ for every $v\in V(G_n).$
Under this hypothesis, they proved that
\[
\ex(n,C_{2k+1},\mathscr {C}_{2k})
=
(1+o(1))\frac{n^{2+1/k}}{4k+2}.
\]
Lazebnik, Ustimenko, and Woldar~\cite{LazebnikUstimenkoWoldar} showed that this hypothesis holds for $k\in\{2,3,5\}$.

More relevant to the present paper, Gerbner et al.~\cite{GGMV} considered the effect of forbidding one additional longer cycle. For integers $l>k\ge 2$, they proved
\[
\Omega_{k,l}\!\left(n^{1+\frac{1}{2l+1}}\right)\le\ex\!\left(n,C_{2k+1},\mathscr {C}_{2k}\cup\{C_{2l}\}\right)\le O_{k,l}\!\left(n^{2-\frac{1}{k+1}}\right),
\]
which gives a subquadratic upper bound when the additional forbidden cycle is even. In contrast, when it is odd, their bounds were
\begin{equation}
\Omega_{k,l}\!\left(
n^{1+\frac{1}{2l+2}}
\right)
\le
\ex\!\left(
n,C_{2k+1},\mathscr {C}_{2k}\cup\{C_{2l+1}\}
\right)
\le
O_{k,l}(n^2).
\label{eq:previous-bounds}
\end{equation}
They conjectured that the quadratic upper bound in \eqref{eq:previous-bounds} can always be improved by a positive power of $n$.

\begin{conjecture}[Gerbner, Gy\H{o}ri, Methuku, and Vizer~\cite{GGMV}]\label{conj:GGMV}
For every pair of integers $l>k\ge 1$, there exists a constant $\varepsilon=\varepsilon(k,l)>0$ such that $ \ex\!\left(n,C_{2k+1},\mathscr {C}_{2k}\cup\{C_{2l+1}\}\right)=O_{k,l}\!\left(n^{2-\varepsilon}\right).$
\end{conjecture}

In this paper, we prove the following result.

\begin{theorem}\label{thm:main}
For all integers $l>k\ge 2$, we have
$$\ex\!\left(n,C_{2k+1},\mathscr {C}_{2k}\cup\{C_{2l+1}\}\right)=O_{k,l}\!\left(n^{\,2-\frac{1}{k(k+1)(l-k)}}\right).$$
\end{theorem}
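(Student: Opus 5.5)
Let $G$ be an $n$-vertex graph that has girth at least $2k+1$ and contains no $C_{2l+1}$. We must bound the number $N$ of copies of $C_{2k+1}$ in $G$. The girth condition gives the following local structure. For every vertex $v$, the ball of radius $k$ around $v$ is a tree. So every copy of $C_{2k+1}$ through $v$ consists of two paths of length $k$ from $v$, followed by an edge joining the two leaves at distance exactly $k$ from $v$, and those leaves lie in different branches.

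The plan has three steps.

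\textbf{Step 1: a dense bipartite-like auxiliary graph.} For a vertex $v$, let $L_i(v)$ be the set of vertices at distance $i$ from $v$. Since the ball is a tree, $L_i(v)$ is a disjoint union over the neighbours $u$ of $v$. Let $H_v$ be the graph of edges inside $L_k(v)$ whose endpoints descend from different neighbours. Each $(2k+1)$-cycle is counted once for each of its $2k+1$ vertices, so
\[
(2k+1)N=\sum_v |E(H_v)|.
\]

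\textbf{Step 2: using the absence of $C_{2l+1}$.} Suppose $H_v$ contains a path $P$ with $2(l-k)$ edges whose two endpoints descend from different neighbours $u\neq u'$. Joining the two endpoints back to $v$ through the tree gives a cycle of length $2k+2(l-k)=2l$, which is even. We want odd length, so the right object is an odd path in $H_v$ with $2(l-k)+1$ edges. Its endpoints lie in $L_k(v)$, and the tree paths to $v$ close it to a cycle of length $2l+1$, provided the cycle is genuine. That requires the two tree paths to be disjoint from each other and from $P$. Disjointness of the tree paths needs the endpoints to descend from different neighbours of $v$. Disjointness from $P$ is automatic, because $P$ lies inside $L_k(v)$ and the tree paths lie in levels $<k$ apart from their endpoints.

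Therefore $H_v$, after colouring each vertex by its branch, contains no path of length $2(l-k)+1$ whose endpoints have different colours. A path whose ends have the same colour can be handled by passing to a random bipartition of the branches, which keeps a constant fraction of the edges of $H_v$. Then every odd path automatically has endpoints in different branches. The result is that $H_v$ contains a bipartite subgraph with $\Omega(|E(H_v)|)$ edges and no path of length $2(l-k)+1$. By Erd\H{o}s--Gallai,
\[
|E(H_v)|=O_{l-k}\bigl(|L_k(v)|\bigr).
\]
This is too weak on its own, since summing gives only $O(n\cdot\text{(ball sizes)})$. I therefore plan a second, graded version of the argument. Perform a dependent-random-choice or double-counting step over levels $1,\dots,k$: count pairs (vertex $w$ at level $j$, edge of $H_v$ below $w$), and use the bound above iteratively, each time losing a factor that yields the exponent $1/(k(k+1)(l-k))$.

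\textbf{Step 3: counting against the number of edges.} Write $e=|E(G)|$. The girth condition and the Bondy--Simonovits theorem give $e=O(n^{1+1/k})$. A copy of $C_{2k+1}$ is determined by an edge $xy$ together with a $2k$-edge path completing it. Since the girth is at least $2k+1$, paths of length $k$ are unique between their endpoints. So $N$ is at most the number of pairs (path of length $k$, path of length $k$) sharing a start vertex and with adjacent ends.

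The argument splits by degree:
\begin{itemize}
\item Vertices of degree $\le n^{\delta}$ contribute at most $n\cdot n^{2k\delta}$ cycles.
\item For high-degree vertices, apply Step 2 at $v$. This bounds the number of cycles through $v$ by $O(|L_k(v)|)\le O(n)$. The number of high-degree vertices is at most $2e/n^{\delta}=O(n^{1+1/k-\delta})$.
\end{itemize}
This gives $N=O(n^{1+2k\delta}+n^{2+1/k-\delta})$. It is not yet subquadratic, so the high-degree count must be refined. Instead of counting high-degree vertices, count cycles containing at least one vertex of high degree, and apply Step 2 at that vertex with the ``depth'' chosen to balance. Choosing $\delta$ appropriately (of order $1/(k(k+1)(l-k))$) and iterating the Erd\H{o}s--Gallai bound across depths should then yield the stated exponent.

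The main obstacle is Step 2's refinement. One must turn the local statement ``$H_v$ has no long odd path between different branches'' into a global saving of $n^{\varepsilon}$. This requires averaging over which vertex of the cycle plays the role of $v$, and over the intermediate levels, so that the linear Erd\H{o}s--Gallai bound beats the trivial bound $|L_k(v)|\cdot\Delta$. I would first try supersaturation: if $N\ge n^{2-\varepsilon}$, find a vertex $v$ for which $H_v$ has average degree exceeding $4(l-k)$, which then forces a $C_{2l+1}$.
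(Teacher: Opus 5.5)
Your Steps 1 and 2 are correct and coincide with the paper's Lemma~\ref{lem:Lv-sparse} (your $H_v$ is the paper's $L_v$). Randomly splitting the branches at $v$ into two groups and applying Erd\H{o}s--Gallai to the bipartite part gives $e(H_v[U])\le 2(l-k)|U|$. But, as you note yourself, this only bounds the number of cycles through $v$ by $O(|N_k(v)|)=O(n)$. Hence the total is only $O(n^2)$, which is the previously known bound. Everything after that is a placeholder. The ``graded version over levels'', the ``dependent random choice'' and the ``depth chosen to balance'' are never specified, and the exponent $\frac{1}{k(k+1)(l-k)}$ is read off from the target rather than derived.

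The concrete mechanisms you do state fail. With degree threshold $n^{t}$ (your $\delta$), the split gives $O(n^{1+2kt}+n^{2+1/k-t})$. This is subquadratic only if $t<\frac{1}{2k}$ and $t>\frac{1}{k}$ simultaneously. Moreover, the first bullet only covers cycles all of whose vertices have low degree. The supersaturation plan has no mechanism behind it. In a $C_{2l+1}$-free graph every $H_v$ already satisfies the local bound, and summing it can give nothing better than $O(\sum_v|N_k(v)|)$. That sum is $\Theta(n^2)$, for instance, when all degrees are about $n^{1/k}$. A power saving needs a $C_{2l+1}$ assembled from cycles at \emph{different} centres, and that idea is absent.

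In the paper this is the flag-counting Lemma~\ref{lem:flag-count}. Let $\mathcal F$ be a family of copies of $C_{2k+1}$, and let $\delta,\Delta$ be the minimum and maximum degrees of their union.
\begin{itemize}
\item The local bound is first turned into an orientation of each $L_v$ with outdegree at most $4(l-k)$.
\item A flag is $(C,v,R)$ with $C\in\mathcal F$, $v\in V(C)$, and $R$ a path of length $l-k$ from $v$ otherwise avoiding $C$. When $\delta\ge 2(k+l+1)$ there are at least $(2k+1)|\mathcal F|(\delta/2)^{l-k}$ flags.
\item Each flag is charged to $(u,x)$, where $u$ is the far end of $R$ and $x\to y$ is the oriented edge of $C$ opposite $v$.
\item For fixed $(u,x)$, a reference flag gives a $u$--$y$ path $Q$ of length $l$ avoiding $x$. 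Every flag charged to $(u,x)$, with centre $v'$, gives a $u$--$x$ path $P$ of length $l$: its $R'$ traversed from $u$ to $v'$, followed by the $v'$--$x$ half of $C'$.
\item $P$ must meet $Q-u$, since otherwise $P\cup Q\cup\{xy\}$ is a $C_{2l+1}$. Uniqueness of paths of length at most $k$ then leaves $O(\Delta^{l-k-1})$ such $P$.
\item $P$ together with the outdegree bound at $x$ determines the flag up to $4(l-k)$ choices. Hence $|\mathcal F|=O(n^2\Delta^{l-k-1}/\delta^{l-k})$.
\end{itemize}

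To exploit this one needs both degree bounds, and the paper obtains them by pruning (Lemma~\ref{lem:pruning}). Let $M$ be the number of copies of $C_{2k+1}$ and $m$ the number of edges. Pruning gives $|\mathcal F|\ge M/2$, with every vertex of the union in at least $M/(4n)$ members and every edge in at least $M/(4m)$ members. Combined with your local bound, this gives $\Delta=O(nm/M)$ and $\delta=\Omega(M^k/(n^km^{k-1}))$. Substituting these and $m=O(n^{1+1/k})$ into the flag bound yields $M=O(n^{2-\frac{1}{k(k+1)(l-k)}})$. If instead $\delta<2(k+l+1)$, the lower bound on $\delta$ alone already gives $M=O(n^{2-1/k^2})$.
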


The case $k=1$ follows from \eqref{C3}. Consequently, Conjecture~\ref{conj:GGMV} holds for all positive integers $k<l$.

\medskip
\noindent\textbf{Notation.}
Throughout the paper, we use the following notation. Let $G=(V(G),E(G))$ be a graph. For $v\in V(G)$, let $N_G(v)$ and $d_G(v)$ denote the neighborhood and degree of $v$, respectively, and let $\delta(G)$ and $\Delta(G)$ denote the minimum and maximum degrees of $G$. For $i\ge 0$, let $N_i^G(v)$ be the set of vertices at distance exactly $i$ from $v$, where the distance is the length of a shortest path and is taken to be $\infty$ if no such path exists. For a graph $H$ and $S\subseteq V(H)$, let $H[S]$ denote the subgraph induced by $S$, and write $e(H):=|E(H)|$. We write $xy$ for the edge $\{x,y\}$, and use $H-xy$ and $H-v$ to denote the deletion of an edge $xy$ and a vertex $v$ together with its incident edges, respectively. The notation $e\ni v$ means that $e$ is an edge incident with $v$. Let $P_r$ denote the path on $r$ vertices.
We will omit the subscript $G$ in all these notations when there is no danger of confusion.

\section{Preliminary results}

We use the following results of Alon, Hoory, and Linial~\cite{AHL} and Erd\H{o}s and Gallai~\cite{ErdosGallai}.

\begin{theorem}[Alon, Hoory, and Linial~\cite{AHL}] \label{thm:AHL}
For every integer $k\ge 2$, $\ex(n,\mathscr {C}_{2k})<\frac12 n^{1+1/k}+\frac12 n.$
\end{theorem}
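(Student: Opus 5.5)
This is the irregular Moore bound of Alon, Hoory, and Linial. I would prove it by counting non-backtracking walks, with an entropy estimate supplying the lower bound.

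\emph{Step 1: reduce to minimum degree at least $2$.} Let $G$ be $\mathscr C_{2k}$-free on $n$ vertices, so $G$ has girth at least $2k+1$. Write $f(m)=\frac12 m^{1+1/k}+\frac12 m$. For $m\ge 1$ we have $f(m)-f(m-1)\ge \frac12 + \frac12 = 1$, since $m^{1+1/k}-(m-1)^{1+1/k}\ge 1$. So deleting a vertex of degree at most $1$ lowers the edge count by at most $1$ and the bound by at least $1$. We repeatedly delete such vertices. If everything is deleted, then $e(G)\le n-1<f(n)$ and we are done. Otherwise the deletions remove at most as many edges as vertices, and we are left with an $n'$-vertex subgraph $G'$ with $\delta(G')\ge 2$ and girth at least $2k+1$. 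It suffices to show $e(G')<f(n')$, because then $e(G)\le e(G')+(n-n')<f(n')+(f(n)-f(n'))=f(n)$.

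\emph{Step 2: upper count.} From now on $\delta(G)\ge 2$, $G$ has girth at least $2k+1$, and $d=2e(G)/n\ge 2$. A non-backtracking walk of length $k$ starting at a vertex $v$ is a path, since a repeated vertex would close a cycle of length at most $k$. Two distinct such walks from $v$ end at distinct vertices other than $v$. Otherwise their union would contain a cycle of length at most $2k$. Hence each vertex starts at most $n-1$ such walks, and the total number $W_k$ of non-backtracking walks of length $k$ satisfies $W_k\le n(n-1)$.

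\emph{Step 3: lower count (the main step).} I claim $W_k\ge nd(d-1)^{k-1}$. Choose a directed edge uniformly at random, one of $2e=nd$ choices. Then continue $k-1$ steps, each time choosing uniformly among the $d(u)-1$ non-backtracking continuations at the current vertex $u$. Uniform measure on directed edges is stationary for this walk, so at every step the current head vertex $u$ is distributed proportionally to $d(u)$. Hence the entropy of the random walk is
\[
\log(nd)+(k-1)\cdot\frac{1}{nd}\sum_{u}d(u)\log\bigl(d(u)-1\bigr).
\]
The function $x\log(x-1)$ is convex on $[2,\infty)$. By Jensen, $\frac1n\sum_u d(u)\log(d(u)-1)\ge d\log(d-1)$, so the weighted average above is at least $\log(d-1)$. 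Since the number of outcomes is at least $2^{\text{entropy}}$, we get $W_k\ge nd(d-1)^{k-1}$. I expect this step to be the main obstacle. The natural Jensen argument applied to $\log$ goes the wrong way, and the fix is to weight by degree and use convexity of $x\log(x-1)$.

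\emph{Step 4: conclude.} Combining Steps 2 and 3 gives $n-1\ge d(d-1)^{k-1}>(d-1)^k$. Therefore $d-1<n^{1/k}$, so $d<n^{1/k}+1$ and $e(G)=nd/2<\frac12 n^{1+1/k}+\frac12 n$.
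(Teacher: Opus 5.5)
Your proof is correct and complete. The paper gives no proof of this statement; it quotes it as a black box from Alon, Hoory, and Linial. Your argument is essentially their original one: prune to $\delta\ge 2$, use girth at least $2k+1$ to get $W_k\le n(n-1)$, and bound $W_k\ge nd(d-1)^{k-1}$ from below via the non-backtracking walk started at a uniform directed edge, its stationarity, and convexity of $x\log(x-1)$. The only difference is cosmetic: Alon--Hoory--Linial count walks of every length up to $k$ to get the sharper bound $n\ge 1+d\sum_{i=0}^{k-1}(d-1)^i$, whereas your single count at length $k$ already suffices for the stated inequality.
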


\begin{theorem}[Erd\H{o}s and Gallai~\cite{ErdosGallai}] \label{thm:EG}
If an $n$-vertex graph contains no $P_r$, then it has at most $(r-2)n/2$ edges.
\end{theorem}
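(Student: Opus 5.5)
The plan is to prove the Erdős–Gallai bound by induction on $n$. The only combinatorial input is a longest-path (Pósa-type) lemma. Since the bound $(r-2)n/2$ is additive over connected components, it suffices to prove it for each component separately.

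\emph{Step 1 (low-degree reduction).} Let $G$ be a $P_r$-free graph on $n$ vertices. Suppose some vertex $v$ has $d(v)\le (r-2)/2$. Then $G-v$ is $P_r$-free on $n-1$ vertices, so by induction $e(G)\le (r-2)(n-1)/2+(r-2)/2=(r-2)n/2$. We therefore may assume $\delta(G)=\delta$ with $2\delta\ge r-1$. Deleting $v$ may disconnect the graph; this is harmless, since the induction runs over all graphs.

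\emph{Step 2 (small components).} Consider a component $H$ on $m$ vertices. If $m\le r-1$, then $e(H)\le m(m-1)/2\le (r-2)m/2$, and this component is fine.

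\emph{Step 3 (key lemma).} Suppose $H$ is connected, has minimum degree $\delta$, and has $m$ vertices. The claim is that $H$ contains a path on at least $\min(m,2\delta+1)$ vertices. Take a longest path $P=v_0v_1\cdots v_t$. By maximality, every neighbour of $v_0$ and of $v_t$ lies on $P$. Suppose $t+1<\min(m,2\delta+1)$, so $t\le 2\delta-1$. Consider the index sets $A=\{i: v_0v_{i+1}\in E\}$ and $B=\{i: v_iv_t\in E\}$. Both are subsets of $\{0,\dots,t-1\}$, and each has size at least $\delta$. Since $|A|+|B|\ge 2\delta> t$, the pigeonhole principle gives an index $i\in A\cap B$. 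Then $v_0\cdots v_i v_t v_{t-1}\cdots v_{i+1}v_0$ is a cycle through all $t+1$ vertices of $P$. Because $m>t+1$ and $H$ is connected, some vertex $w\notin P$ is adjacent to a cycle vertex. Opening the cycle at that vertex and appending $w$ yields a path on $t+2$ vertices, contradicting maximality.

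\emph{Step 4 (conclusion).} In a component with $m\ge r$ vertices, the lemma gives a path on at least $\min(m,2\delta+1)\ge r$ vertices, using $2\delta\ge r-1$. This contradicts $P_r$-freeness. So after Step 1 every component is as in Step 2, and summing over components gives $e(G)\le (r-2)n/2$.

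The only delicate point is the pigeonhole-and-rotation argument in Step 3. In particular, one must set up the index shift between $A$ and $B$ correctly so that a common index really closes a spanning cycle of $P$. The remaining steps are routine.
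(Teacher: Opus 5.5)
The paper gives no proof of this statement. It is quoted from Erd\H{o}s and Gallai and used only as a black box in Lemma~\ref{lem:Lv-sparse}, to bound the number of edges of the bipartite $P_{2d+2}$-free graph $J$. So there is no paper proof to match.

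Your argument is a correct, self-contained proof, and it is the standard textbook one. You induct on $n$ and delete a vertex of degree at most $(r-2)/2$ if one exists. Otherwise $2\delta\ge r-1$, and the Dirac-type longest-path lemma (a connected graph on $m$ vertices with minimum degree $\delta$ has a path on at least $\min(m,2\delta+1)$ vertices) forces every component to have at most $r-1$ vertices. There the trivial bound $\binom{m}{2}\le (r-2)m/2$ applies.

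Your index bookkeeping in Step 3 is right. You have $|A|=d(v_0)$ and $|B|=d(v_t)$, both sets lie in a $t$-element set, and any $i\in A\cap B$ closes the cycle $v_0\cdots v_iv_tv_{t-1}\cdots v_{i+1}v_0$. The one degenerate case is $t=1$, where this ``cycle'' is just the edge $v_0v_1$ traversed twice. There the contradiction is immediate anyway, since an outside neighbour of $v_0$ or $v_1$ extends $P$ directly.

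The other common route deduces the path bound from the Erd\H{o}s--Gallai circumference theorem. That theorem says a graph on $N$ vertices with no cycle of length at least $k$ has at most $(k-1)(N-1)/2$ edges. One adds a vertex adjacent to all of $G$ and applies it with $k=r+1$. This yields the stronger cycle statement as a by-product, but it relies on a harder theorem. Your route needs only the rotation argument and is fully sufficient for how the result is used in this paper.
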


We record the following immediate consequence of the girth condition.

\begin{observation}\label{obs:unique-path}
Let $G$ be an $\mathscr {C}_{2k}$-free graph. Then any two vertices of $G$ are joined by at most one path of length at most $k$.
\end{observation}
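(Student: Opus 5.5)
The plan is to argue by contradiction, using only that $G$ has no cycle of length between $3$ and $2k$. Suppose $x,y\in V(G)$ are joined by two distinct paths $P=x p_1\cdots p_{a-1}y$ and $Q=x q_1\cdots q_{b-1}y$ with $a,b\le k$. The goal is to extract from $P\cup Q$ a cycle of length at most $a+b\le 2k$. Such a cycle is a member of $\mathscr{C}_{2k}$, which $G$ does not contain.

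The first step locates where the two paths separate. Walk along $P$ and $Q$ from $x$ simultaneously. Since $P\neq Q$ and both are paths starting at $x$ and ending at $y$, they cannot coincide vertex by vertex all the way. Hence there is a last common initial vertex $u$: the initial segments up to $u$ agree, but the successors of $u$ on $P$ and on $Q$ differ. Note $u\neq y$, because if $u=y$ the two walks would be identical.

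The second step finds where the paths rejoin. Let $w$ be the first vertex on $Q$ after $u$ that also lies on $P$ after $u$. Such a vertex exists because $y$ lies on both paths after $u$. The segment $Q[u,w]$ meets $P$ only in its endpoints, by the choice of $w$ and because $Q$ is a path, so it cannot revisit the initial segment before $u$. Therefore $P[u,w]\cup Q[u,w]$ is a cycle.

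The third step checks the length of this cycle, and it is the only delicate point. The length is at least $3$, since the two $u$–$w$ segments are internally disjoint and distinct, and $G$ is simple so there are no parallel edges. The length is at most $|P|+|Q|\le 2k$. Hence $G$ contains some $C_i$ with $3\le i\le 2k$, contradicting that $G$ is $\mathscr{C}_{2k}$-free. I expect no real obstacle beyond carefully justifying that the chosen segments are internally disjoint, which is exactly why $u$ is taken as the last common vertex of the shared prefix and $w$ as the first re-meeting point along $Q$.
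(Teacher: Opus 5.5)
Your proof is correct and takes the same approach as the paper. The paper simply asserts that the union of two distinct paths of length at most $k$ between the same endpoints contains a cycle of length at most $2k$. Your divergence/re-merging argument, with $u$ the last common prefix vertex and $w$ the first re-meeting vertex on $Q$, supplies the standard justification, including the check that the cycle has length at least $3$.
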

Indeed, if two distinct paths of length at most \(k\) joined the same pair of vertices, then their union would contain a cycle of length at most \(2k\), a contradiction.

Fix integers $l>k\ge 2$. Let $G$ be an $(\mathscr {C}_{2k}\cup\{C_{2l+1}\})$-free graph, and let $\mathcal F$ be a family of copies of $C_{2k+1}$ in $G$. For $C\in\mathcal F$ and $v\in V(C)$, let $xy$ be the unique edge of $C$ such that the $v$--$x$ and $v$--$y$ paths in $C-xy$ both have length $k$. We call $xy$ the edge of $C$ opposite $v$. By Observation~\ref{obs:unique-path}, $x,y\in N_k(v)$, and $v$ together with $xy$ uniquely determines $C$.

For $v\in V(G)$, let $L_v=L_v(\mathcal F)$ be the graph on $N_k(v)$ whose edges are the edges opposite $v$ in the cycles of $\mathcal F$ containing $v$. Then $L_v\subseteq G[N_k(v)]$ and
\begin{equation}
e(L_v)=\bigl|\{C\in\mathcal F:v\in V(C)\}\bigr|.
\label{eq:Lv-count}
\end{equation}

\begin{lemma} \label{lem:Lv-sparse}
For every $v\in V(G)$ and every $U\subseteq N_k(v)$, $e(L_v[U])\le 2(l-k)|U|.$ Consequently, $L_v$ admits an orientation with maximum outdegree at most $4(l-k)$.
\end{lemma}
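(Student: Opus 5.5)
The plan is to deduce the density bound from the Erd\H{o}s--Gallai theorem (\cref{thm:EG}), by showing that $L_v$ contains no long path, and then to obtain the orientation by a standard Hakimi-type argument. Throughout, for each $x\in N_k(v)$ let $T_x$ denote the unique $v$--$x$ path of length $k$ (Observation~\ref{obs:unique-path}). Its internal vertices lie in $N_1(v)\cup\dots\cup N_{k-1}(v)$, so $T_x$ meets $N_k(v)$ only in $x$. Two facts follow from the definition of $L_v$. First, if $xy\in E(L_v)$, then the cycle of $\mathcal F$ through $v$ with opposite edge $xy$ is exactly $T_x\cup T_y\cup\{xy\}$. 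Hence $T_x$ and $T_y$ share only $v$, i.e.\ $x$ and $y$ lie in different branches at $v$. Second, for $x,y\in N_k(v)$ let $j(x,y)$ be the depth of the last common vertex of $T_x$ and $T_y$. Since the $T$'s form a BFS tree from $v$, the two paths agree up to depth $j(x,y)$ and are disjoint afterwards.

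\textbf{Step 1 (long paths give a forbidden cycle).} Let $P=x_0x_1\dots x_m$ be a path in $L_v$. Its vertices all lie in $N_k(v)$ and are disjoint from the interiors of the $T$'s. Therefore, with $j=j(x_0,x_m)$, the union of $P$ with the parts of $T_{x_0}$ and $T_{x_m}$ below depth $j$ is a genuine cycle of length
\[
m+2(k-j).
\]
We want this to equal $2l+1$, which requires $m=2(l-k)+1+2j(x_0,x_m)$.

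\textbf{Step 2 (choosing the endpoint).} Take a path $x_0\dots x_M$ in $L_v$ with $M=4(l-k)+1$. Set $g(t)=t-2j(x_0,x_t)$ and look for a $t\le M$ with $g(t)=2(l-k)+1$. We have $g(1)=1$, since consecutive vertices lie in different branches. Also $g(t)\equiv t\pmod 2$, and $g(t)\ge t-2(k-1)$. I expect this step to be the main obstacle, because $g$ can jump and might skip the target value.

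My first attempt at getting around this would be to exploit the freedom of the starting point. The same function can be built from any $x_s$, and reversing or shifting the path gives many windows $x_s\dots x_{s+m}$ with $m=2(l-k)+1$ to test. For a window with $j(x_s,x_{s+m})=0$ we are done directly. If every such window has its endpoints in a common branch, I would compare the windows starting at $s$ and $s+1$: branches alternate along edges, and this would force a periodic pattern of branches along $P$. I would then try to turn that pattern into a shorter cycle, of length at most $2k$, inside the union of a few $T$'s and a segment of $P$, contradicting $\mathscr C_{2k}$-freeness. If this bookkeeping resists, a fallback is to replace the path by a subgraph of $L_v[U]$ of large minimum degree and choose the path greedily so that the final vertex lies in a new branch. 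The length budget of $4(l-k)+1$, which is twice the target $2(l-k)+1$, is what I would aim to use here.

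\textbf{Step 3 (conclusion).} Once $L_v[U]$ is shown to contain no $P_{4(l-k)+2}$, \cref{thm:EG} gives
\[
e(L_v[U])\le \frac{4(l-k)}{2}\,|U|=2(l-k)|U|.
\]
For the orientation, every subgraph of $L_v$ then has average degree at most $4(l-k)$. By Hakimi's theorem, or equivalently by orienting along a degeneracy ordering via Hall's theorem, $L_v$ admits an orientation with maximum outdegree at most $\max_U \lceil e(L_v[U])/|U|\rceil\le 2(l-k)\le 4(l-k)$. This even gives a factor of two to spare; greedy degeneracy alone gives $4(l-k)$ directly.
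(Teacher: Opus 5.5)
\textbf{Step 2 has a genuine gap, and it cannot be closed: its target is false.} Your Step 2 is meant to show that $L_v[U]$ contains no $P_{4(l-k)+2}$, and this fails already for $k=2$, $l=3$.

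Take vertices $v,w_0,w_1,w_2,x_0,\dots,x_5$. Join $v$ to $w_0,w_1,w_2$, join each $x_i$ to $w_{i\bmod 3}$, and add the path $x_0x_1\cdots x_5$. Every cycle of this graph has length $5$, $6$, $8$ or $9$, so it is $(\mathscr{C}_4\cup\{C_7\})$-free. Also $N_2(v)=\{x_0,\dots,x_5\}$. For $0\le i\le 4$, the $5$-cycles $v\,w_{i\bmod 3}\,x_i\,x_{i+1}\,w_{(i+1)\bmod 3}\,v$ have opposite edge $x_ix_{i+1}$. So if $\mathcal F$ is the set of all $5$-cycles, $L_v$ is exactly the path $x_0x_1\cdots x_5\cong P_6=P_{4(l-k)+2}$.

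The branches along this path follow exactly the periodic pattern $0,1,2,0,1,2$ you hoped to exclude. Every window of length $3=2(l-k)+1$ has both ends in one branch. Starting from $x_0$, your $g$ takes the values $1,2,1,4,5$ and skips the target $3$. The short cycles this pattern creates have lengths $5$ and $6$, which girth $5$ allows. So no choice of windows, shifts, or greedy extensions inside $L_v$ itself will produce a $C_{2l+1}$. Your Step 1 (the cycle of length $m+2(k-j)$) and your orientation step are correct.

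\textbf{The missing idea is to make an endpoint's branch depend only on parity.} Every edge of $L_v$ joins different branches. So assign each branch independently at random to one of two sides, and keep only the edges of $L_v[U]$ that go between the sides. Some assignment gives a bipartite subgraph $J$ with $e(J)\ge\frac12 e(L_v[U])$ whose two parts are unions of branches.

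In $J$, every path of odd length $m$ has its endpoints in different branches. Hence $j(x_0,x_m)=0$, and your Step 1 gives a cycle of length exactly $2k+m$. Taking $m=2(l-k)+1$ shows that $J$ is $P_{2(l-k)+2}$-free. Erd\H{o}s--Gallai then gives $e(J)\le (l-k)|U|$, and so $e(L_v[U])\le 2(l-k)|U|$.

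The factor $2$ you tried to absorb by doubling the path length must instead be paid on the edge count; this is what the paper does. With the density bound in hand, your orientation argument goes through; Hakimi's theorem even gives outdegree at most $2(l-k)$.
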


\begin{proof}
Set $d:=l-k\ge 1$, and fix $v\in V(G)$ and $U\subseteq N_k(v)$. For each $x\in N_k(v)$, let $P_x$ be the unique $v$--$x$ path of length $k$. For each $w\in N_G(v)$, define
\[
A_w:=\{x\in N_k(v):P_x\text{ begins with the edge }vw\}.
\]
The nonempty sets $A_w$ form a partition of $N_k(v)$.

We claim that every edge of $L_v$ joins two distinct classes of this partition. Indeed, if $xy\in E(L_v)$, then $xy$ is the edge opposite $v$ on some cycle $C\in\mathcal F$. The $v$--$x$ and $v$--$y$ paths in $C-xy$ are internally vertex-disjoint. By Observation~\ref{obs:unique-path}, they are precisely $P_x$ and $P_y$, and hence they begin with distinct edges incident to $v$. Therefore, every edge of $L_v$ joins two distinct classes of the partition. Consequently, every edge of $L_v[U]$ also joins two distinct classes of the partition.

Now assign each nonempty class independently and uniformly at random to one of two groups $1,2$, and let $U_i$ be the set of vertices of $U$ whose class is assigned to group $i$, for $i\in\{1,2\}$. For any fixed edge $xy\in E(L_v[U])$, its endpoints lie in distinct classes, so $xy$ joins $U_1$ and $U_2$ with probability $1/2$. Thus the expected number of edges between $U_1$ and $U_2$ is $\frac12 e(L_v[U])$. Hence, for some assignment, the spanning subgraph $J$ of $L_v[U]$ consisting of the edges between $U_1$ and $U_2$ satisfies
\[
e(J)\ge \frac12 e(L_v[U]).
\]
Since $(U_1,U_2)$ is a bipartition of $J$, the graph $J$ is bipartite.

We claim that $J$ is $P_{2d+2}$-free. Suppose not, and let $x_0x_1\cdots x_{2d+1}$ be a copy of $P_{2d+2}$ in $J$. Since this path has odd length, its endpoints lie in different parts of $J$; hence $x_0$ and $x_{2d+1}$ belong to classes assigned to different groups, so $P_{x_0}$ and $P_{x_{2d+1}}$ begin with different edges at $v$. By Observation~\ref{obs:unique-path}, they meet only at $v$, for any other common vertex would yield two distinct $v$-paths of length at most $k$. Moreover, every internal vertex of $P_{x_0}$ or $P_{x_{2d+1}}$ has distance at most $k-1$ from $v$, while every $x_i$ lies in $N_k(v)$. Thus the path $x_0x_1\cdots x_{2d+1}$ meets $P_{x_0}\cup P_{x_{2d+1}}$ only at its endpoints. Their union therefore contains a cycle of length $k+(2d+1)+k=2k+2d+1=2l+1$, contradicting the $C_{2l+1}$-freeness of $G$. Thus $J$ is $P_{2d+2}$-free.

By Theorem~\ref{thm:EG}, $e(J)\le \frac{2d}{2}|U|=d|U|$. Consequently, $e(L_v[U])\le 2e(J)\le 2d|U|=2(l-k)|U|$. Since this holds for every $U\subseteq N_k(v)$, every nonempty induced subgraph of $L_v$ has average degree at most $4d$, and thus contains a vertex of degree at most $4d$. We repeatedly delete such vertices from $L_v$ until no vertices remain, and list the vertices in the order of deletion as $z_1,\ldots,z_m$. By construction, for each $i$, the vertex $z_i$ has at most $4d$ neighbors among $z_{i+1},\ldots,z_m$. Orient each edge $z_iz_j$ from $z_i$ to $z_j$ whenever $i<j$. Then the outdegree of $z_i$ is at most $4d$, so the resulting orientation has maximum outdegree at most $4d=4(l-k)$.
\end{proof}

\begin{lemma}
\label{lem:flag-count}
Let $l>k\ge 2$, and let $G$ be an $(\mathscr {C}_{2k}\cup\{C_{2l+1}\})$-free graph on $N$ vertices, with minimum degree $\delta$ and maximum degree $\Delta$. Let $\mathcal F$ be a family of copies of $C_{2k+1}$ in $G$. If $\delta\ge 2(k+l+1),$ then $|\mathcal F|=O_{k,l}\!\left(\frac{N^2\Delta^{l-k-1}}{\delta^{l-k}}\right).$
\end{lemma}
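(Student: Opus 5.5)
The plan is a double-counting argument with extended flags. By \eqref{eq:Lv-count}, $(2k+1)|\mathcal F|=\sum_v e(L_v)$. Since $\sum_v|N_k(v)|\le N^2$ and $L_v$ has outdegree at most $4(l-k)$ by Lemma~\ref{lem:Lv-sparse}, this already gives $|\mathcal F|=O(N^2)$. To gain the factor $\Delta^{d-1}/\delta^{d}$, where $d:=l-k$, I will attach to each flag a path of length $d$ hanging off the cycle. The lower count will then pick up $\delta^d$, and the upper count will lose only $\Delta^{d-1}$.

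\textbf{Lower count.} Fix an orientation of each $L_v$ as in Lemma~\ref{lem:Lv-sparse}. Consider triples $(v,C,Q)$ with $C\in\mathcal F$ and $v\in V(C)$. Let $\vec{xy}$ be the oriented opposite edge of $C$ in $L_v$. Let $Q=xq_1q_2\cdots q_d$ be a path of length $d$ starting at the tail $x$ that avoids $V(C)$ and the at most $2l+1$ vertices already used. Since $\delta\ge 2(k+l+1)$, each step of $Q$ has at least $\delta-(k+l+1)\ge\delta/2$ choices. Hence the number of triples is at least $(2k+1)|\mathcal F|(\delta/2)^d$.

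\textbf{Upper count.} Map each triple to the ordered pair $(v,q_d)$, which gives at most $N^2$ targets. It then suffices to show that each pair $(v,q_d)$ arises from $O_{k,l}(\Delta^{d-1})$ triples. Given $(v,q_d)$, walking backwards along $Q$ to $q_1$ costs at most $\Delta^{d-1}$ choices. Given $q_1$, we need $O(1)$ choices for $x$. Once $x$ is known, $y$ is one of at most $4d$ out-neighbours of $x$ in $L_v$, and $(v,xy)$ determines $C$ by Observation~\ref{obs:unique-path}. Combining the two counts yields $|\mathcal F|\cdot\delta^d\lesssim N^2\Delta^{d-1}$, which is the claim.

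\textbf{Main obstacle.} The difficulty is bounding the number of $x\in N_k(v)\cap N(q_1)$ that are tails of $L_v$-edges; a priori this could be as large as $\Delta$. My first attempt is to use $C_{2l+1}$-freeness in the spirit of the proof of Lemma~\ref{lem:Lv-sparse}. Suppose $q_1$ has many such neighbours $x$ lying in distinct classes $A_w$. Each such $x$ carries a cycle $C$ of length $2k+1$ through $v$, and walking around $C$ plus $q_1$ gives odd-parity freedom. Choosing two such $x,x'$ in different classes together with suitable segments of their cycles, I would try to assemble a cycle of length exactly $2l+1$. If $q_1\in N_{k+1}(v)$ this seems plausible; if not, I would reroute using the paths $P_x$.

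If this cannot be made to work directly, the fallback is to modify the count. I would require $Q$ to be chosen along out-edges of an auxiliary low-outdegree orientation, or require $q_1\notin N_{\le k}(v)$, and then lose only a constant factor in the lower count. Either way, this injectivity step is where I expect the real work to lie.
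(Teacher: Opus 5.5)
Your framework is sound. Hanging $Q$ of length $d=l-k$ off the tail $x$ gives the lower count $(2k+1)|\mathcal F|(\delta/2)^d$, and assigning each triple to $(v,q_d)$ is legitimate. The gap is the one step that must save a factor of $\Delta$. The triples assigned to $(v,q_d)$ correspond to $v$--$q_d$ paths of length $l$, namely $P_x$ followed by $Q$, where $P_z$ is the unique $v$--$z$ path of length $k$. Girth alone only gives $p_l(v,q_d)\le\Delta^{d}$.

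You try to save the factor locally, by claiming $O(1)$ admissible $x$ for fixed $(v,q_1)$. That holds for $l=k+1$ but is false for $l\ge k+2$. Take cycles $C_1,\dots,C_t\cong C_{2k+1}$ pairwise meeting only in $v$, with opposite edges oriented $x_i\to y_i$. Add a vertex $q_1$ adjacent to every $x_i$, and a pendant path $q_1q_2\cdots q_d$. Every cycle through $q_1$ has length $2k+2$, $2k+3$ or $2k+4$, so the graph is $(\mathscr C_{2k}\cup\{C_{2l+1}\})$-free. Yet $(v,q_1)$ admits $t$ choices of $x$. The global bound still holds there, since $\Delta\ge t$ and $d\ge 2$, so the saving cannot be made at the last step. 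This is also why your plan to build a $C_{2l+1}$ from two tails $x,x'$ at a common $q_1$ cannot work: such configurations only close odd cycles of length $2k+3$.

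Your fallbacks do not help either. In the example $q_1\in N_{k+1}(v)$ already. Forcing $Q$ along out-edges of a low-outdegree orientation destroys the factor $(\delta/2)^d$ in the lower count.

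The missing idea is to apply $C_{2l+1}$-freeness along the whole path, via a reference triple; this is the paper's device.
\begin{enumerate}
\item Fix one triple $(v,C,Q)$ assigned to $(v,q_d)$, with opposite edge $x\to y$. Let $S$ be the path that follows $P_y$ from $v$ to $y$, then the edge $yx$, then $Q$. It is a $v$--$q_d$ path of length $l+1$ with $l$ internal vertices.
\item For any triple $(v,C',Q')$ assigned to the same pair, $T:=P_{x'}\cup Q'$ is a $v$--$q_d$ path of length $l$. If $T$ shared no internal vertex with $S$, then $S\cup T$ would be a copy of $C_{2l+1}$.
\item Hence $T$ passes through one of the $l$ internal vertices of $S$. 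The second part of Claim~\ref{claim} then gives at most $l(l-1)\Delta^{d-1}$ choices of $T$.
\item $T$ determines $x'$ (its vertex at distance $k$ from $v$) and $Q'$. There are at most $4d$ choices of $y'$ by Lemma~\ref{lem:Lv-sparse}, and $(v,x'y')$ determines $C'$ by Observation~\ref{obs:unique-path}.
\end{enumerate}
This gives $O_{k,l}(\Delta^{d-1})$ triples per pair and completes your argument. It is the paper's proof with the extra path attached at $x$ rather than at $v$. The paper attaches $R$ at $v$ and assigns the flag to $(u,x)$, where $u$ is the end of $R$. It then compares a $u$--$x$ path of length $l$ with the reference $u$--$x$ path of length $l+1$ through $v$ and $y$.
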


\begin{proof} For each $v\in V(G)$, fix an orientation of $L_v=L_v(\mathcal F)$ with maximum outdegree at most $4(l-k)$, as given by Lemma~\ref{lem:Lv-sparse}.

\begin{claim}\label{claim}
For $a,b\in V(G)$ and $s\ge 1$, let $p_s(a,b)$ denote the number of $a$--$b$ paths of length $s$. Then $p_s(a,b)\le \Delta^{\max\{s-k,0\}}.$ Moreover, for distinct vertices $z,x,w$, the number of $z$--$x$ paths of length $l$ containing $w$ is at most $(l-1)\Delta^{l-k-1}.$
\end{claim}
\begin{proof}
For $s\le k$, the first assertion follows from Observation~\ref{obs:unique-path}. Suppose that $s>k$. There are at most $\Delta^{s-k}$ possibilities for the first $s-k$ edges of an $a$--$b$ path of length $s$. Once this initial segment is fixed, the remaining segment is a path of length $k$ to $b$, and is unique if it exists by Observation~\ref{obs:unique-path}. Hence $p_s(a,b)\le \Delta^{s-k}.$

For the second assertion, suppose that $w$ is the $i$th vertex after $z$ on such a path, where $1\le i\le l-1$. For a fixed $i\in\{1,\ldots,l-1\}$, the first assertion implies that the number of such paths in which $w$ is the $i$th vertex after $z$ is at most
\[
p_i(z,w)p_{l-i}(w,x)
\le
\Delta^{\max\{i-k,0\}+\max\{l-i-k,0\}}.
\]
We claim that
\[
\max\{i-k,0\}+\max\{l-i-k,0\}
\le l-k-1.
\]
If both terms on the left are positive, then their sum is
\[
(i-k)+(l-i-k)=l-2k\le l-k-1.
\]
Otherwise, at least one term is zero. Since $1\le i\le l-1$, the other term is at most $l-k-1$. Summing over the $l-1$ possible positions of $w$, we obtain at most $(l-1)\Delta^{l-k-1}$ such paths, proving the claim.
\end{proof}

A \emph{flag} is a triple $(C,v,R)$, where $C\in\mathcal F$, $v\in V(C)$, and
\[
R=v_0v_1\cdots v_{l -k},
\qquad
v_0=v,
\qquad
V(R)\cap V(C)=\{v\}.
\]
Let $\Phi$ denote the number of flags. For fixed $C$ and $v$, construct $R$ successively. If $v_0,\ldots,v_j$ have been chosen, where $0\le j<l -k$, then
\[
\bigl|V(C)\cup\{v_0,\ldots,v_j\}\bigr|
=
2k+j+1
\le
2k+l-k
=
k+l.
\]
Thus $v_j$ has at least $\delta-(k+l)\ge \frac{\delta}{2} $ available neighbors. Consequently,
\begin{equation}
\Phi
\ge
(2k+1)|\mathcal F|
\left(\frac{\delta}{2}\right)^{l-k}.
\label{eq:flag-lower}
\end{equation}

For each flag $(C,v,R)$, write the edge of $C$ opposite $v$ as $x\to y$ according to the fixed orientation of $L_v$, and assign the flag to the ordered pair$(v_{l-k},x).$ Since $v_{l-k}\notin V(C)$, the two vertices in this ordered pair are distinct.

\begin{claim}
\label{claim:flag-bound}
For any distinct vertices $u,x\in V(G)$, at most $4(l-k)l(l-1)\Delta^{l-k-1}$ flags are assigned to $(u,x)$.
\end{claim}

\begin{proof}
Fix distinct vertices $u,x\in V(G)$, and suppose that at least one flag is assigned to $(u,x)$. Fix one such flag $(C,v,R)$ as a \emph{reference flag}. Write
\[
R=v_0v_1\cdots v_{l-k},
\qquad
v_0=v,
\qquad
v_{l-k}=u,
\]
and let $x\to y$ be the oriented edge of $C$ opposite $v$.

Let $Q$ be the path obtained by traversing $R$ from $u$ to $v$ and then following the $v$--$y$ path of length $k$ in $C-xy$. The reference flag is fixed, and hence so is $Q$. Since $V(R)\cap V(C)=\{v\},$ the path $Q$ is simple and has length $l$. Moreover, $x\notin V(Q)$: the $v$--$y$ path in $C-xy$ does not contain $x$, while $R-v$ is disjoint from $C$.

Now let $(C',v',R')$ be any flag assigned to $(u,x)$. Write
\[
R'=v'_0v'_1\cdots v'_{l-k},
\qquad
v'_0=v',
\qquad
v'_{l-k}=u,
\]
and let $x\to y'$ be the oriented edge of $C'$ opposite $v'$. By traversing $R'$ from $u$ to $v'$ and then following the $v'$--$x$ path of length $k$ in $C'-xy'$, we obtain a simple path $P$ of length $l$ from $u$ to $x$.

We claim that $V(P)\cap\bigl(V(Q)\setminus\{u\}\bigr)\ne\varnothing.$ Indeed, otherwise $P$ and $Q$ would meet only at $u$. Since $x\notin V(Q)$, the paths $P$ and $Q$, together with the edge $xy$, would form a cycle of length $l+l+1=2l+1,$ contradicting the assumption that $G$ is $C_{2l+1}$-free. Since the fixed path $Q$ has length $l$, $|V(Q)\setminus\{u\}|=l.$ Also, every vertex of $V(Q)\setminus\{u\}$ is distinct from both $u$ and $x$. Thus, by Claim~\ref{claim}, for each $w\in V(Q)\setminus\{u\}$, there are at most $(l-1)\Delta^{l-k-1}$ paths of length $l$ from $u$ to $x$ containing $w$. Summing over the $l$ choices of $w$, possibly counting the same path more than once, gives at most $l(l-1)\Delta^{l-k-1}$ possibilities for $P$.

Fix one such path and write
\[
P=p_0p_1\cdots p_l,
\qquad
p_0=u,
\qquad
p_l=x.
\]
For any flag $(C',v',R')$ giving rise to $P$, the construction of $P$ forces
\[
v'=p_{l-k},
\qquad
R'=p_{l-k}p_{l-k-1}\cdots p_0,
\]
and forces the $v'$--$x$ path in $C'-xy'$ to be $p_{l-k}p_{l-k+1}\cdots p_l.$ Hence $P$ uniquely determines $v'$, $R'$, and the $v'$--$x$ path in $C'-xy'$.

It remains to choose $y'$ such that $x\to y'$ is an edge in the fixed orientation of $L_{v'}$. By Lemma~\ref{lem:Lv-sparse}, there are at most $4(l-k)$ choices for $y'$. For each such choice, Observation~\ref{obs:unique-path} implies that $v'$ and the edge $xy'$ determine at most one cycle $C'\in\mathcal F$. Since $R'$ is already fixed, each choice of $y'$ gives at most one flag. Therefore, at most $4(l-k)l(l-1)\Delta^{l-k-1}$ flags are assigned to $(u,x)$.
\end{proof}

Since every flag is assigned to exactly one ordered pair of distinct vertices of $G$, and there are at most $N(N-1)\le N^2$ such ordered pairs, Claim~\ref{claim:flag-bound} gives
\begin{equation}
\Phi\le4(l-k)l(l-1)N^2\Delta^{l-k-1}.
\label{eq:flag-upper}
\end{equation}
Combining \eqref{eq:flag-lower} and \eqref{eq:flag-upper}, we obtain
\[
(2k+1)|\mathcal F|
\left(\frac{\delta}{2}\right)^{l-k}
\le
4(l-k)l(l-1)N^2\Delta^{l-k-1}.
\]
Therefore,
\[
|\mathcal F|
\le
\frac{2^{l-k+2}(l-k)l(l-1)}{2k+1}
\frac{N^2\Delta^{l-k-1}}{\delta^{l-k}}
=
O_{k,l}\!\left(
\frac{N^2\Delta^{l-k-1}}{\delta^{l-k}}
\right),
\]
as required.
\end{proof}

\section{Proof of Theorem \ref{thm:main}}
\label{sec:main-proof}

\begin{lemma}
\label{lem:pruning}
Let $G$ be a graph with $n$ vertices and $m$ edges, and suppose that $G$ contains $M>0$ copies of $C_{2k+1}$. Then there exists a family $\mathcal F$ of at least $M/2$ such cycles such that every vertex contained in a member of $\mathcal F$ is contained in at least $M/(4n)$ members of $\mathcal F$, and every edge contained in a member of $\mathcal F$ is contained in at least $M/(4m)$ members of $\mathcal F$.
\end{lemma}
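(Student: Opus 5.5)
We will use the standard iterative deletion argument. Let $\mathcal F_0$ be the family of all $M$ copies of $C_{2k+1}$ in $G$. We repeatedly modify the current family $\mathcal F_i$ as follows. If some vertex $v$ lies in at least one but fewer than $M/(4n)$ members of $\mathcal F_i$, we delete all members of $\mathcal F_i$ containing $v$. Otherwise, if some edge $e$ lies in at least one but fewer than $M/(4m)$ members of $\mathcal F_i$, we delete all members containing $e$. The process stops when neither kind of vertex or edge exists, and we let $\mathcal F$ be the final family. By construction, every vertex of a member of $\mathcal F$ lies in at least $M/(4n)$ members of $\mathcal F$, and every edge of a member lies in at least $M/(4m)$ members. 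Both properties are conditions on the final family, so they hold no matter how the deletions were interleaved.

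It remains to show $|\mathcal F|\ge M/2$. The key point is that each vertex can trigger a deletion step at most once. After a deletion triggered by $v$, no member of the current family contains $v$. Families only shrink, so $v$ lies in no member from then on and can never again be ``in at least one member''. The same holds for each edge. A step triggered by a vertex removes fewer than $M/(4n)$ cycles, and there are at most $n$ vertices, so vertex-triggered steps remove fewer than $n\cdot M/(4n)=M/4$ cycles in total. Edge-triggered steps similarly remove fewer than $m\cdot M/(4m)=M/4$ cycles. Hence the total removed is less than $M/2$, and $|\mathcal F|>M/2$. In particular $\mathcal F$ is nonempty since $M>0$.

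There is no real obstacle here. The one step needing care is the observation that each vertex or edge triggers at most once, since this is what makes the bound on deleted cycles sum correctly. The thresholds are measured with respect to the current family, which only makes the final conditions easier to verify, and the strict inequalities give the bound $|\mathcal F|\ge M/2$ as stated. Termination is clear, because each step removes at least one cycle and the family is finite.
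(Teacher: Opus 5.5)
Your proposal is correct and follows the paper's proof essentially step for step. Both use the same iterative deletion process, the same observation that each vertex or edge can trigger at most one deletion, and the same count of fewer than $M/4+M/4$ deleted cycles; fixing the order as vertices first, then edges, changes nothing.
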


\begin{proof}
Let $\mathcal A$ initially be the family of all $M$ copies of $C_{2k+1}$ in $G$. Repeatedly perform either of the following operations whenever possible:
\begin{itemize}
\item if a vertex contained in a member of $\mathcal A$ is contained in fewer than $M/(4n)$ members of $\mathcal A$, delete all members of $\mathcal A$ containing that vertex;
\item if an edge contained in a member of $\mathcal A$ is contained in fewer than $M/(4m)$ members of $\mathcal A$, delete all members of $\mathcal A$ containing that edge.
\end{itemize}
The process terminates since each operation deletes at least one cycle. Let $\mathcal F$ be the family remaining at termination.

Each vertex can trigger at most one deletion operation, since no remaining cycle contains it afterward. As each such operation deletes fewer than $M/(4n)$ cycles, all vertex-deletion operations together delete fewer than $n\cdot\frac{M}{4n}=\frac{M}{4}$ cycles. Similarly, each edge can trigger at most one operation, so all edge-deletion operations together delete fewer than $m\cdot\frac{M}{4m}=\frac{M}{4}$ cycles. Thus fewer than $M/2$ cycles are deleted, and hence $|\mathcal F|\ge \frac{M}{2}.$ By the termination condition, every vertex and every edge contained in a member of $\mathcal F$ has the required multiplicity.
\end{proof}

\begin{proof}[\textbf{Proof of Theorem~\ref{thm:main}}]
Let $G$ be an $n$-vertex $(\mathscr {C}_{2k}\cup\{C_{2l+1}\})$-free graph, and set $M:=N(C_{2k+1},G)$ and $m:=e(G).$ There is nothing to prove if $M=0$, so assume that $M>0$. Since $G$ is $\mathscr {C}_{2k}$-free, Theorem~\ref{thm:AHL} gives
\begin{equation}
m=O_k\!\left(n^{1+1/k}\right).
\label{eq:edge-bound}
\end{equation}

Apply Lemma~\ref{lem:pruning}, and let $\mathcal F$ be the resulting family of copies of $C_{2k+1}$. Let $H$ be the union of the cycles in $\mathcal F$, and write
\[
N:=|V(H)|,
\qquad
\delta:=\delta(H),
\qquad
\Delta:=\Delta(H).
\]
Then $N\le n$, and $H$ is also $(\mathscr {C}_{2k}\cup\{C_{2l+1}\})$-free. All degrees and neighborhoods below are taken in $H$.

For $v\in V(H)$ and $e\in E(H)$, let
\[
c(v):=
\bigl|\{C\in\mathcal F:v\in V(C)\}\bigr|,
\qquad
\mu(e):=
\bigl|\{C\in\mathcal F:e\in E(C)\}\bigr|.
\]
Every vertex and every edge of $H$ lies in a member of $\mathcal F$. Thus Lemma~\ref{lem:pruning} gives
\begin{equation}
|\mathcal F|\ge \frac{M}{2},
\qquad
c(v)\ge \frac{M}{4n},
\qquad
\mu(e)\ge \frac{M}{4m}.
\label{eq:multiplicity-lower}
\end{equation}
For every $v\in V(H)$, the vertex $v$ is incident with exactly two edges of each cycle in $\mathcal F$ containing $v$. Hence, by the definitions of $c(v)$ and $\mu(e)$, $\sum_{e\ni v}\mu(e)=2c(v).$

For each $v\in V(H)$, form the auxiliary graph $L_v=L_v(\mathcal F)$ in $H$. Since \(H\) is \(\mathscr {C}_{2k}\)-free and every cycle in \(\mathcal F\) is contained in \(H\), the endpoints of the edge opposite \(v\) lie in \(N_k^H(v)\). Thus the auxiliary graph \(L_v(\mathcal F)\) is well-defined in \(H\). By \eqref{eq:Lv-count} and Lemma~\ref{lem:Lv-sparse}, applied with $U=N_k(v)$,
\begin{equation}
c(v)=e(L_v)
\le 2(l-k)|N_k(v)|
\le 2(l-k)N.
\label{eq:cv-upper}
\end{equation}
It follows from \eqref{eq:multiplicity-lower} and \eqref{eq:cv-upper} that
\[
\frac{M}{4m}d_H(v)
\le
\sum_{e\ni v}\mu(e)
=
2c(v)
\le
4(l-k)N
\le
4(l-k)n.
\]
Taking the maximum over all $v\in V(H)$ yields
\begin{equation}
\Delta
\le
16(l-k)\frac{nm}{M}
=
O_{k,l}\!\left(\frac{nm}{M}\right).
\label{eq:Delta-upper}
\end{equation}

Every vertex of $N_k(v)$ is the endpoint of a path of length $k$ starting at $v$. The first edge of such a path has at most $d_H(v)$ choices, and each of the remaining $k-1$ edges has at most $\Delta$ choices. Hence the number of such paths is at most $d_H(v)\Delta^{k-1}$, and therefore $|N_k(v)|\le d_H(v)\Delta^{k-1}.$ Combining this with \eqref{eq:multiplicity-lower}, \eqref{eq:Lv-count}, and Lemma~\ref{lem:Lv-sparse}, we obtain
\[
\frac{M}{4n}
\le
c(v)
=
e(L_v)
\le
2(l-k)|N_k(v)|
\le
2(l-k)d_H(v)\Delta^{k-1}.
\]
Therefore, $\delta\ge\frac{M}{8(l-k)n\Delta^{k-1}}.$
Using \eqref{eq:Delta-upper}, we get
\begin{equation}
\delta\ge \frac{1}{8(l-k)\bigl(16(l-k)\bigr)^{k-1}}\frac{M^k}{n^km^{k-1}}=\Omega_{k,l}\!\left(\frac{M^k}{n^km^{k-1}}\right).
\label{eq:delta-lower}
\end{equation}

Suppose first that $\delta<2(k+l+1).$ Then \eqref{eq:delta-lower} implies $M^k=O_{k,l}\!\left(n^km^{k-1}\right).$ Taking the $k$th root and applying Theorem~\ref{thm:AHL} through \eqref{eq:edge-bound}, we obtain
\[
M=O_{k,l}\!\left(nm^{(k-1)/k}\right)=O_{k,l}\!\left(n^{2-1/k^2}\right).
\]
Since $l-k\ge1$, $k(k+1)(l-k)\ge k^2,$ and therefore $2-\frac{1}{k^2}\le2-\frac{1}{k(k+1)(l-k)}.$ Thus the required bound holds in this case.

We may therefore assume that $\delta\ge 2(k+l+1).$ Lemma~\ref{lem:flag-count}, applied to $H$ and $\mathcal F$, gives $|\mathcal F|=O_{k,l}\!\left(\frac{N^2\Delta^{l-k-1}}{\delta^{l-k}}\right).$ By ~\eqref{eq:multiplicity-lower}, we have $|\mathcal F|\ge M/2$, and $N\le n$. Hence
\[
M=O_{k,l}\!\left(\frac{n^2\Delta^{l-k-1}}{\delta^{l-k}}\right).
\]
Substituting \eqref{eq:Delta-upper} and \eqref{eq:delta-lower}, we obtain
\begin{align*}
M=O_{k,l}\!\left[n^2\left(\frac{nm}{M}\right)^{l-k-1}\left(\frac{n^km^{k-1}}{M^k}\right)^{l-k}\right] =
O_{k,l}\!\left(n^{(k+1)(l-k)+1}m^{k(l-k)-1}M^{1-(k+1)(l-k)}\right).
\end{align*}
Consequently,
\[
M^{(k+1)(l-k)}=O_{k,l}\!\left(n^{(k+1)(l-k)+1}m^{k(l-k)-1}\right).
\]
Applying Theorem~\ref{thm:AHL} again through \eqref{eq:edge-bound}, we get
\begin{align*}
M^{(k+1)(l-k)}=O_{k,l}\!\left(n^{(k+1)(l-k)+1+(1+1/k)(k(l-k)-1)}\right) =O_{k,l}\!\left(n^{2(k+1)(l-k)-1/k}\right).
\end{align*}
Taking the $(k+1)(l-k)$th root gives $M=O_{k,l}\!\left(n^{\,2-\frac{1}{k(k+1)(l-k)}}\right),$ as required.
\end{proof}

\section{Concluding remarks}

In this paper, we proved that, for all integers $l>k\ge 2$,
\[
\ex\!\left(
n,C_{2k+1},\mathscr {C}_{2k}\cup\{C_{2l+1}\}
\right)
=
O_{k,l}\!\left(
n^{\,2-\frac{1}{k(k+1)(l-k)}}
\right),
\]
thereby confirming Conjecture~\ref{conj:GGMV}.

For fixed integers $l>k\ge 2$, it remains an interesting open problem to determine the orders of magnitude of $\ex\!\left(n,C_{2k+1},\mathscr {C}_{2k}\cup\{C_{2l}\}\right)$ and $\ex\!\left(n,C_{2k+1},\mathscr {C}_{2k}\cup\{C_{2l+1}\}\right).$

\end{document}